\documentclass[10pt]{amsart}
\usepackage{color}
\usepackage{tipa}
\usepackage{amssymb}
\usepackage{stmaryrd}
\usepackage{amsmath,amsfonts,amsthm,mathrsfs,bbm,array,subfigure}

\usepackage{booktabs}
\usepackage{url}

\newtheorem{theorem}{Theorem}[section]

\newtheorem{proposition}[theorem]{Proposition}

\newtheorem{problem}[theorem]{Problem}

\theoremstyle{definition}

\theoremstyle{remark}
\newtheorem{remark}[theorem]{Remark}

\numberwithin{equation}{section}

%    Absolute value notation

%    Blank box placeholder for figures (to avoid requiring any
%    particular graphics capabilities for printing this document).

\newcommand{\Q}{{\mathbb Q}}
\newcommand{\Pn}{{\mathbb P^n}}
\newcommand{\Sl}{{\mathcal{S}_{\ell}}}
\newcommand{\Sln}{{\mathcal{S}}_{\ell}^{(n)}}

\begin{document}
	
    \title[Partitions and Elliptic Curves]{Partitions into triples with equal products and Families of Elliptic Curves}

	\author{Ahmed El Amine Youmbai}
	%    Address of record for the research reported here
	\address{LABTHOP Laboratory, Mathematics Department, Faculty of Exact Sciences, University of El Oued, PO Box 789, 39000 Echott El Oued, Algeria}
	\email{youmbai-amine@univ-eloued.dz}
	%    \thanks will become a 1st page footnote.
	
	%\thanks{This research was supported by }
	
	\author{Arman Shamsi Zargar}
	%    Address of record for the research reported here
	\address{Department of Mathematics and Applications, University of Mohaghegh Ardabili, Ardabil, Iran}
	\email{zargar@uma.ac.ir}
	
	\author{Maksym Voznyy}
	\address{Department of Technology, Stephen Leacock CI, Toronto District School Board, 
		Toronto, Canada}
	\email{maksym.voznyy@tdsb.on.ca}
	
	%    General info
	\subjclass[2020]{Primary 14H52, 11D25; Secondary 11D09, 05A17}
	
%	\date{  }
	
	\keywords{triples of integers, equal sum of integers, equal products, diophantine equation, partition, parametric solution, elliptic curve}

\begin{abstract}
	Let $\Sl(M,N)$ denote a set of $\ell$ triples of positive integers having the same sum $M$ and the same product $N$. For each $2\leq\ell\leq 4$ we establish a connection between a subset of $\Sl(M,N)$ with (integral) parametric elements and a family of elliptic curves. When $\ell=2$ and $3$, we use certain known subsets of $\Sl(M,N)$ with parametric elements and respectively find families of elliptic curves of generic rank~$\geq 5$ and $\geq 6$, while for $\ell=4$ we first obtain a subset of $\Sl(M,N)$ with parametric elements, then construct a family of elliptic curves of generic rank $\geq 8$. Finally, we perform a computer search within these families to find specific curves with rank~$\geq 11$  and in particular we found two curves of rank~$14$.
\end{abstract}
\maketitle

\section{Introduction}

For any positive integers $\ell$ and $n$, let $\Sln(M,N)$ denote a set of $\ell$ triples $(x_{j1}^n,x_{j2}^n,x_{j3}^n)$ of positive integers having the same sum $M$ and the same product $N$. For the $j$-th element  of the set, we define
$$\begin{aligned}
	M_j^{(n)}&=\sum_{i=1}^{3} x_{ji}^n, & N_j^{(n)}&=\prod_{i=1}^{3} x_{ji}^n, 
	& T_j^{(n)}&=\sum_{1\leq i<k\leq 3} x_{ji}^nx_{jk}^n, & (1\leq j\leq \ell).
\end{aligned}$$ 
We drop the notation ``$(n)$" from the terminology when $n=1$. 

In this context, we are interested in the following partitioning problem, which from the geometric point of view, is equivalent to examining the existence of $\ell$ rectangular boxes with integer sides having the same perimeter and the same volume:
\begin{problem}
	\label{problem}
	For given positive integers $\ell\geq 2$ and $n$, construct a subset of $\Sln(M,N)$ with (integral) parametric elements.
\end{problem}
Note that solving this problem is equivalent to finding a parametric solution to the symmetric system of diophantine equations 
\begin{equation}
	\label{Sln}
	\begin{aligned}
		M&=M_j^{(n)}, &	N&=N_j^{(n)},
	\end{aligned}
\end{equation}
for all $j=1,\ldots,\ell$.

Ever since Motzkin's conjecture, the aforesaid problem has been subjected to investigations by some authors (see \cite{C-Y}, \cite[D16]{Guy} and \cite{Mauldon}).  In 1989, Kelly \cite{KellyII} showed the existence of an arbitrarily large number of such triples. In 1996, Schinzel \cite{Schinzel} reproved the same result in a different way. None of these two proofs was effective for constructing a numerical example until in 2012, Choudhry \cite{ChoudhryTriads} gave a constructive method. It is pertinent to note that in 2013, Zhang and Cai \cite{Zhang-Cai} generalized the aforesaid result of Kelly from triples to $m$-tuples. 

To the best knowledge of the authors, Problem~\ref{problem} has been effectively solved only for the $(\ell,n)$'s given in the following table:
\begin{table}[htbp]
	\caption{Known parametric solutions to \eqref{Sln}}\label{tab: known solutions}
	\begin{tabular}{cc}
		\toprule
		$(\ell,n)$ &  References in the chronological order of discovery  \\ 
		\midrule
		$(2,1)$  &  \cite{Moessner}, \cite{Gloden}, \cite{Chou-acta}, \cite{Choudry2par},  \cite{Chou-Rocky2013}\\ [5pt]
		$(2,2)$ & \cite{Bini1909}, \cite{Gloden}, \cite{Zeit}, \cite{Chou-acta}, \cite{Kelly-AMS1991}, \cite{Chou-Ganita}, \cite{Chou-Wrob}, \cite{Choud-JIS-2021}  \\ [5pt]
		$(2,3)$ &   \cite{Gloden}, \cite{Choud-ms-2001}  \\ [5pt]
		$(2,4)$ & 	\cite{ChoudTriads2001}, \cite{Chou-biquadrates2017} \\ [5pt]
		$(3,1)$ & \cite{Sadek} \\ [5pt]
		$(3,2)$ & \cite{Chou-Wrob} \\
		\bottomrule
	\end{tabular}
\end{table}

On the other direction, making any connection between algebraic or geometric notions and elliptic curves has been of interest to researchers. Specifically, symmetric diophantine equations of certain forms have led to families of elliptic curves with higher ranks, see, for example, \cite{A-P, Chou-Zar23, Duje2002}. As far as we are aware, there are only a few works in  literature that have studied some connections between elliptic curves of positive rank and triples satisfying~\eqref{Sln}. In 1989, Kelly \cite{KellyII} illustrated such a connection between positive rank elliptic curves of the form $y^2-Mxy-Ny=x^3$ and triples satisfying~\eqref{Sln} for $(\ell,n)=(1,1)$. In 2015, Sadek and El-Sissi \cite{Sadek} extended this result by studying the same family assigned to triples satisfying~\eqref{Sln} for $(\ell,n)=(2,1)$ and $(3,1)$ and showed that the generic ranks of the associated families are $\geq 2$ and $\geq 3$, respectively. In 2019, Choudhry \cite{Choudhry} constructed several families of elliptic curves whose generic ranks range from $\geq 8$ to $\geq 12$ coming from the system $M=M_j$ and $T=T_j$ for some $j$'s. 

In this work, we establish a new connection between each of certain solutions of \eqref{Sln} for $2\leq \ell \leq 4$ with $n=1$ and a family of elliptic curves, whose torsion subgroups are trivial in general. When $(\ell,n)=(2,1)$ and $(3,1)$, we use the solutions given in \cite{Chou-Rocky2013} and \cite{Sadek} and correspondingly introduce a family of generic rank $\geq 5$ and $\geq 6$. For the case $(\ell,n)=(4,1)$, we first obtain a parametric solution of \eqref{Sln} and then construct a family of elliptic curves of generic rank~$\geq 8$. Finally, we perform a computer search within those families to find specific curves with high rank and in particular we found two curves of rank~$14$

\section{Families of elliptic curves related to \eqref{Sln}}

First we recall the following specialisation theorem of N\'{e}ron \cite[Theorem~20.1]{Silverman} that we need for our next results:
\begin{theorem}
	\label{Neron}
	Let $K$ be a number field, and let $E$ be an elliptic curve defined over the function field $K(\Pn)$. Then there are infinitely many points $t\in\Pn(K)$ such that the specialisation homomorphism 
	$$\sigma_t: E(K(\Pn)) \rightarrow E_t(K)$$
	is injective. The set of $t$ for which $\sigma_t$ is noninjective forms a thin set.
\end{theorem}

\subsection{A family with rank~$\geq 5$} \label{sectionA} According to \cite[Subsection~3.1]{Chou-Rocky2013}, Choudhry has fully determined the set ${\mathcal{S}}_2(M,N)$ by finding two complete solutions to the related equations~\eqref{Sln}. Without loss of generality, we consider his first solution, then 
$${\mathcal{A}}:={\mathcal{A}}(M,N):={\mathcal{S}}_2(M,N)=\{(a_1,a_2,a_3), (b_1,b_2,b_3)\}$$
where
\begin{equation*}
	\begin{aligned}
		a_1&=p(s+rt), &	a_2&=q(s+pt), & a_3&=r(s+qt), \\
		b_1&=q(s+rt), & b_2&=r(s+pt), & b_3&=p(s+qt).
	\end{aligned}
\end{equation*}
To this set, we assign an elliptic curve described by the equation
$$y^2=(x+a_1 a_2)(x+a_2 a_3)(x+a_1 a_3)+e^2_1 x^2,$$
where $e_1$ is a nonzero rational number which will be defined later. 

The above elliptic curve can be rewritten as:
\begin{equation}
	\label{E2}
	y^2=x^3+(T_1+e_1^2)x^2+MNx+N^2,
\end{equation}
where $T_1=a_1a_2+a_1a_3+a_2a_3$. 

By imposing 
\begin{equation}
	\label{condition1}
	T_1 + e_1^2= T_2 + e_2^2,
\end{equation}
for some rational $e_2$, the curve~\eqref{E2} will contain the seven rational points $P_1=(0, N)$, $P_{ij}=(-a_ia_j, a_ia_je_1)$ and $Q_{ij}=(-b_ib_j, b_ib_je_2)$, $1\leq i<j\leq 3$. Here, $T_2=b_1b_2+b_1b_3+b_2b_3$.

The quadratic~\eqref{condition1} is easily accomplished by parametrising
\begin{equation*}
	\label{e1}
	e_1=\frac{k^2+T_2-T_1}{2k}, \  e_2=\frac{k^2+T_1-T_2}{2k},
\end{equation*}
for any nonzero rational number $k$. 

By this description, we have thus constructed a family of elliptic curves, defined over $\Q(p,q,r,s,t,k)$, coming from the elements of ${\mathcal{A}}$, that has the above rational points. We denote this family by ${\mathcal E}_{{\mathcal{A}}}$.  

\begin{remark}
	Notice that the points $P_{ij}$ are co-linear (because their $(x,y)$-coordinates satisfy the linear equation $y+e_1x=0$), showing that at most two of them can be linearly independent. The same result holds for the points $Q_{ij}$. Therefore, at most five points of the seven points of ${\mathcal E}_{\mathcal A}$ can be linearly independent. 
\end{remark}

In the next theorem we show that five of seven points of $E_{\mathcal A}$ are linearly independent for infinitely many tuples $(p,q,r,s,t,k)$. 

\begin{theorem}\label{th1}
	For the set of all tuples $(p,q,r,s,t,k)$ except for a thin subset, the rank of the family ${\mathcal E}_{\mathcal{A}}$ is at least five with the five linearly independent points $P_1$, $P_{12}$, $P_{13}$, $Q_{12}$, $Q_{13}$.
\end{theorem}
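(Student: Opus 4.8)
The plan is to prove linear independence over $\Q(p,q,r,s,t,k)$ by a specialization argument: it suffices to exhibit a single rational point $(p_0,q_0,r_0,s_0,t_0,k_0)$ for which the five points $P_1$, $P_{12}$, $P_{13}$, $Q_{12}$, $Q_{13}$ specialize to five linearly independent points on the resulting elliptic curve over $\Q$, since the specialization homomorphism on Mordell--Weil groups is injective for all but finitely many members of an algebraic family (by Silverman's specialization theorem, after checking the chosen fiber is nonsingular). Thus the theorem reduces to a finite computation on one concrete curve.

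First I would record the curve \eqref{E2} together with the explicit substitutions for $e_1,e_2$ in terms of $T_1,T_2,k$, and note that $T_1,T_2,M,N$ are themselves explicit polynomials in $p,q,r,s,t$ coming from the parts $a_i,b_i$ of $\mathcal{A}$; then I would choose small integer values, e.g. something like $p=1,q=2,r=3$ and convenient $s,t,k$, arranged so that the $a_i,b_i$ are distinct positive integers and the discriminant of the resulting cubic is nonzero (ensuring a smooth fiber). Next I would verify that the seven points listed in the excerpt do lie on the specialized curve — which is automatic from the construction, but worth a sanity check — and then compute the canonical height pairing matrix (the regulator) of the five candidate points $P_1$, $P_{12}$, $P_{13}$, $Q_{12}$, $Q_{13}$; showing this $5\times 5$ matrix is nonsingular (nonzero determinant) certifies that the five points are linearly independent over $\Z$, hence the rank of the specialized curve is at least five, hence the generic rank of $\mathcal{E}_{\mathcal{A}}$ is at least five.

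To make the argument self-contained rather than purely numerical, I would also indicate why the collinearity relations from the Remark are the \emph{only} relations forced by the construction: the three points $P_{12},P_{13},P_{23}$ lie on the line $y+e_1 x=0$, so $P_{12}+P_{13}+P_{23}=O$ in the group law (after checking orientation/signs), and similarly $Q_{12}+Q_{13}+Q_{23}=O$; this is what reduces the seven points to at most five independent ones and explains why $P_{23},Q_{23}$ are dropped from the list. The substantive content — that no further relation holds generically among the remaining five — is exactly what the height computation on the specialized fiber rules out.

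The main obstacle is the choice of the specialization point: one must pick parameter values for which (i) all of $p,q,r,s,t,k$ are nonzero and the $a_i,b_i$ are positive and pairwise distinct so that $\mathcal{A}$ is a genuine element of $\mathcal{S}_2(M,N)$, (ii) the condition \eqref{condition1} still makes the $e_i$ well-defined rationals, (iii) the specialized curve \eqref{E2} is nonsingular, and (iv) the regulator of the five points does not accidentally vanish — a naive choice can easily produce a singular fiber or an unexpected torsion/low-rank coincidence. In practice one tries a handful of small tuples and computes the height matrix until a nonsingular one appears; once such a tuple is found the proof is complete, with the "all but finitely many" clause inherited directly from Silverman's specialization theorem.
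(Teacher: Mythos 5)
Your proposal follows essentially the same route as the paper: invoke Silverman's specialisation theorem, specialise the parameters to a concrete tuple, and certify linear independence of $P_1$, $P_{12}$, $P_{13}$, $Q_{12}$, $Q_{13}$ by showing the $5\times 5$ canonical height pairing matrix has nonzero determinant (the paper uses $(p,q,r,s,t,k)=(1,4,2,4,1,1)$, giving regulator $\approx 23.48$). The only thing missing is the explicit specialisation and the numerical regulator value, which your outlined search procedure would supply.
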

\begin{proof}	
	By N\'{e}ron's specialisation theorem (Theorem~\ref{Neron}), in order to prove that the family ${\mathcal E}_{\mathcal{A}}$ has rank~$\geq 5$ over $\Q(p,q,r,s,t,k)$, it suffices to find a specialisation $(p,q,r,s,t,k)=(p_0,q_0,r_0,s_0,t_0,k_0)$ such that the above points in the statement are linearly independent on the specialised curve over $\Q$. We take $(p,q,r,s,t,k)=(1, 4, 2, 4, 1, 1)$ for which we have 
	$${\mathcal{A}}(42,1920)=\{(6, 20, 16), (24, 10, 8)\}$$
	and $$T_1=536, \  T_2=512, \ e_1= -\frac{23}{2}, \ e_2= \frac{25}{2}.$$
	The five points are linearly independent and of infinite order on the specialised rank~$5$ elliptic curve
	$${\mathcal E}_{\mathcal{A}(42,1920)}: y^2 = x^3 + \frac{2673}{4}x^2 + 80640x + 3686400.$$
	Indeed, the determinant of the N\'{e}ron--Tate height pairing matrix (a.k.a. regulator) of the specialised five points with $x$-coordinates
	$$x(P_1)=0, \ x(P_{12})=-120, \ x(P_{13})=-96, \ x(Q_{12})=-240, \ x(Q_{13})=-192,$$
	is the nonzero value $23.4808049005680$ as computed by SageMath \cite{Sage}. This shows that the family of elliptic curves ${\mathcal E}_{\mathcal{A}}$ has rank~$\geq 5$ over $\Q(p,q,r,s,t,k)$ with independent points $P_1$, $P_{12}$, $P_{13}$, $Q_{12}$, $Q_{13}$ (except for a thin subset of $(p,q,r,s,t,k)$'s).  
\end{proof}

\begin{remark}
	Corresponding to each of the known subsets $\mathcal S\neq \mathcal A$ of $\Sln(M,N)$ with $\ell\geq 2$, being found in the references mentioned in Table~\ref{tab: known solutions} and in Subsection~\ref{sectionC} (i.e., the set $\mathcal C$), one can assign a family of elliptic curves similar to ${\mathcal E}_{\mathcal{A}}$ and accordingly establish a result as that of  Theorem~\ref{th1}. 
\end{remark}

\subsection{A family with rank~$\geq 6$}  \label{sectionB} From \cite[Section~4]{Sadek}, we consider the subset ${\mathcal{B}}:={\mathcal{B}}(M,N)$ of ${\mathcal{S}}_3(M,N)$
with the elements  
$$\begin{aligned}
	(a_1,a_2,a_3)&=(pqrw,s,z), & (b_1,b_2,b_3)&=(w,qrs,pz), & (c_1,c_2,c_3)&=(pw,qs,rz),
\end{aligned}$$
where
$$
\begin{array}{c}
	s=pqr(r-p)+p^2-p-r+1, \quad w=q(r^2-p-r+1)+p-r, \vspace{.15cm}\\ \text{and} \quad z=pqr(qr-q-1)+p+q-1.
\end{array}
$$
Then, we set 
$$f_1(x)=\prod_{i=1}^{3}(x+a_i), \ 
f_2(x)=\prod_{i=1}^{3}(x+b_i), \ 
f_3(x)=\prod_{i=1}^{3}(x+c_i),
$$
so that 
\begin{equation}
	\label{f123}
	f_1(x)=f_i(x)+(T_1-T_i)x, \ i=2,3.
\end{equation}

We now introduce the quartic elliptic curve
\begin{equation}
	\label{ell3}
	\begin{aligned}
		E: y^2&=Axf_1(x)+B^2x^2, \\
		&=Ax(f_2(x)+(T_1-T_2)x)+B^2x^2, \\
		&=Ax(f_3(x)+(T_1-T_3)x)+B^2x^2,
	\end{aligned}
\end{equation}
where  
$$T_1=\sum_{1\leq i<j\leq 3}a_ia_j, \ T_2=\sum_{1\leq i<j\leq 3}b_ib_j, \ T_3=\sum_{1\leq i<j\leq 3}c_ic_j,$$
and the coefficients $A$ and $B$ will be given later. Note that the last two equalities in \eqref{ell3} come from \eqref{f123}. By the first equality of \eqref{ell3}, the elliptic curve $E$ clearly contains the three rational points with abscissas $-a_1$, $-a_2$, $-a_3$. Besides, by the second and third equality of \eqref{ell3}, the curve $E$ has the additional rational points with abscissas $-b_i$ and $-c_i$, $i=1,2,3$, if and only if the system of equations 
$$A(T_1-T_{i+1})=m_i^2-B^2, \ i=1,2,$$
is solvable. This system has the following parametric solution
\begin{equation}
	\label{ABmi}
	\begin{aligned}
		A&=-4hk(h-k)\left((T_1-T_2)h-(T_1-T_3)k\right),\\
		B&=(T_1-T_2)h^2-(T_1-T_3)k^2, \\
		m_i&=(T_1-T_2)h^2-2(T_1-T_{i+1})hk+(T_1-T_3)k^2, \ i=1,2,
	\end{aligned}
\end{equation}
for any nonzero, unequal rationals $h,k$. 

Henceforth, we have constructed a quartic family of elliptic curves, defined over $\Q(p,q,r,h,k)$, coming from the elements of $\mathcal{B}$, that has the nine points
$$P_i=(-a_i, a_iB), \ Q_i=(-b_i, b_im_1), \ R_i=(-c_i, c_im_2), \ i=1, 2, 3,$$
where $A$, $B$ and $m_i$'s are given in \eqref{ABmi}. 

The quartic elliptic curve $E$ is birationally equivalent to the cubic elliptic curve ${\mathcal E}_{\mathcal B}$ given by the equation
\begin{equation*}
	\label{cubicell3}
	Y^2=X^3+(AT_1+B^2)X^2+A^2M_iN_iX+A^3N_i^2, \ i=1,2,3,
\end{equation*}
via the rational transformation $X=AN_i/x$, $Y=AN_iy/x^2$. 

In the view of this transformation, the nine points $P_i$, $Q_i$ and $R_i$, $i=1,2,3$, are mapped respectively to the points ${\mathcal P}_i$, ${\mathcal Q}_i$ and ${\mathcal R}_i$, given below,
$$
\begin{aligned}
	{\mathcal P}_1&=(-a_2a_3A, a_2a_3AB), &  {\mathcal Q}_1&=(-b_2b_3A, b_2b_3Am_1), & 	{\mathcal R}_1&=(-c_2c_3A, c_2c_3Am_2), \\
	{\mathcal P}_2&=(-a_1a_3A, a_1a_3AB), &  {\mathcal Q}_2&=(-b_1b_3A, b_1b_3Am_1), & 	{\mathcal R}_2&=(-c_1c_3A, c_1c_3Am_2), \\
	{\mathcal P}_3&=(-a_1a_2A, a_1a_2AB), &  {\mathcal Q}_3&=(-b_1b_2A, b_1b_2Am_1), & 	{\mathcal R}_3&=(-c_1c_2A, c_1c_2Am_2).
\end{aligned}
$$

\begin{remark}
	Notice that the points ${\mathcal P}_{i}$ are co-linear (because their $(x,y)$-coordinates satisfy the linear equation $x+By=0$), showing that at most two of them can be linearly independent. The same result holds for each of the points ${\mathcal Q}_{i}$ and ${\mathcal R}_{i}$. Therefore, at most six of the nine points of ${\mathcal E}_{\mathcal{B}}$ can be linearly independent. 
\end{remark}

In the next theorem we show that six of the nine points of ${\mathcal E}_{\mathcal B}$ are linearly independent for infinitely many tuples $(p,q,r,h,k)$. 

\begin{theorem}\label{th2}
	For the set of all tuples $(p,q,r,h,k)$ except for a thin subset, the rank of the family 
	${\mathcal E}_{\mathcal{B}}$ is at least six with the six linearly independent points ${\mathcal P}_i$, $\mathcal{Q}_i$, ${\mathcal R}_i$, $i=1,2$.
\end{theorem}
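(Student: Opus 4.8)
The plan is to imitate the proof of Theorem~\ref{th1}: apply Silverman's specialisation theorem \cite[Theorem~11.4]{Silverman}, which reduces the claim that $\mathcal{E}_{\mathcal{B}}$ has rank $\geq 6$ over $\Q(p,q,r,h,k)$ with the stated independent points to the task of exhibiting a single rational specialisation $(p,q,r,h,k)=(p_0,q_0,r_0,h_0,k_0)$ at which the six specialised points $\mathcal{P}_1,\mathcal{P}_2,\mathcal{Q}_1,\mathcal{Q}_2,\mathcal{R}_1,\mathcal{R}_2$ are linearly independent on the resulting elliptic curve over $\Q$. First I would pick small integer values of $p,q,r$ for which $s,w,z$ are positive (so that $\mathcal{B}$ is genuinely a set of positive-integer triples) and the three triples are pairwise distinct, then choose unequal rationals $h,k$ so that $A\neq 0$, $B\neq 0$, $m_1,m_2$ are nonzero, and the discriminant of $\mathcal{E}_{\mathcal{B}}$ is nonzero; this is the generic situation, so essentially any sufficiently small admissible choice works.

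Next I would record the resulting data explicitly: the numerical triples $(a_i),(b_i),(c_i)$, the values $M_i,N_i,T_i$, the constants $A,B,m_1,m_2$ from \eqref{ABmi}, the Weierstrass coefficients of the specialised curve $\mathcal{E}_{\mathcal{B}}$, and the $x$-coordinates $-a_2a_3A,\ -a_1a_3A,\ -b_2b_3A,\ -b_1b_3A,\ -c_2c_3A,\ -c_1c_2A$ of the six points. Then I would have SageMath \cite{Sage} compute the $6\times 6$ N\'eron--Tate height-pairing matrix (regulator) of these points and check that its determinant is nonzero; a nonzero regulator certifies both that each point has infinite order and that the six are linearly independent in $\mathcal{E}_{\mathcal{B}}(\Q)$, hence the specialised rank is $\geq 6$. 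By Silverman's theorem the specialisation map is injective for all but finitely many $(p,q,r,h,k)$, so the generic rank of $\mathcal{E}_{\mathcal{B}}$ over $\Q(p,q,r,h,k)$ is at least six, with the asserted points independent outside a finite exceptional set.

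The main obstacle is purely computational rather than conceptual: one must find a specialisation that is simultaneously ``small'' enough for SageMath to compute canonical heights reliably and ``generic'' enough to avoid the finitely many degeneracies --- vanishing of $A$, $B$, $h-k$, $(T_1-T_2)h-(T_1-T_3)k$, or of the curve's discriminant, as well as any accidental coincidence among the triples or any unexpected linear relation among the six points at that particular point of the parameter space. Since the upper bound of six from the preceding remark is attained at the chosen specialisation, no relation can survive generically, so the only real work is the bookkeeping of the substitution and trusting the regulator computation; I would, as in Theorem~\ref{th1}, simply exhibit the chosen tuple, the curve, the six $x$-coordinates, and the numerical value of the regulator, and note that its non-vanishing completes the proof.
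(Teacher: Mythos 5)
Your proposal follows essentially the same route as the paper: invoke Silverman's specialisation theorem, exhibit one concrete specialisation (the paper uses $(p,q,r,h,k)=(2,2,4,1,-1)$), and certify independence of the six points by computing a nonzero regulator in SageMath. The only slip is a typo in your list of abscissas, where the sixth point should be $-c_1c_3A$ (i.e.\ ${\mathcal R}_2$) rather than $-c_1c_2A$ (which is ${\mathcal R}_3$); otherwise the argument matches the paper's proof.
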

\begin{proof}	
	Take the specialisation $(p,q,r,h,k)=(2, 2, 4, 1, -1)$ which makes
	$s=31$, $z=83$, $w=20$, and hence we have 
	$${\mathcal{B}}(434,823360)=\{(320, 31, 83), (20, 248, 166), (40, 62, 332)\}$$
	and
	$$
	\begin{array}{c}
		T_1= 39053, \quad T_2= 49448, \quad T_3= 36344, \vspace{.2cm}\\ 
		A= -61488, \quad B= -13104, \quad m_1=-28476, \quad m_2= -2268.
	\end{array}
	$$
	The regulator of the six specialised points with $X$-coordinates
	$$
	\begin{aligned}
		X({\mathcal P}_1)& = 158208624, & X({\mathcal Q}_1)&= 2531337984, & X({\mathcal R}_1)&= 1265668992,\\
		X({\mathcal P}_2)& = 1633121280, & X({\mathcal Q}_2)&= 204140160, & X({\mathcal R}_2)&= 816560640,
	\end{aligned}
	$$
	on the specialised elliptic curve 
	$$\begin{aligned}
		{\mathcal{E}}_{{\mathcal{B}(434,823360)}}:	Y^2 &= X^3 - 2229576048X^2 + 1351015178454466560X \\
		&\qquad - 157597974109784775013171200
	\end{aligned}$$
	is the nonzero value $534.520794417629$, carried out by SageMath. The result follows from Theorem~\ref{Neron}.
\end{proof}

\begin{remark}
	Corresponding to each of the known subsets $\mathcal S\neq \mathcal B$ of $\Sln(M,N)$ with $\ell\geq 3$, being found in \cite{Sadek} and \cite{Chou-Wrob} (cf.~Table~\ref{tab: known solutions}) and in Subsection~\ref{sectionC}, one can assign a family of elliptic curves similar to ${\mathcal E}_{\mathcal{B}}$ and accordingly  establish a result as that of Theorem~\ref{th2}. 
\end{remark}

\subsection{A family with rank~$\geq 8$}  \label{sectionC}
For $(\ell,n)=(4,1)$, there is no known parametric solution of \eqref{Sln}. We first begin with presenting such a parametric solution. 
\begin{proposition} \label{ours}
	For arbitrary integers $q$ and $s$ with $q\neq 0, s^2$, $s\neq 0, 1$, let 
	$$\begin{aligned}
		t_1 &= q^2s^2-2q^2s+qs+q-1, \\
		t_2 &= q^2s^2-2q^2s+qs^2-qs+s^2+q-s, \\
		t_3 &= q^3s^3-q^2s^4-4q^3s^2+5q^2s^3-qs^4+4q^3s-6q^2s^2+2qs^3-s^4+q^2s+2s^3\\
		& \quad \quad -2q^2+qs-2s^2+q,\\
		t_4&= -q^2s^5+q^3s^3+4q^2s^4-4q^3s^2-4q^2s^3-2qs^4+4q^3s+q^2s^2+4qs^3-q^2s\\
		&\quad \quad -s^3-2q^2-qs+s^2+2q-s, \\ 
		t_5&= q^2s^7+q^4s^4-3q^3s^5-4q^2s^6+qs^7-4q^4s^3+15q^3s^4-qs^6+4q^4s^2-22q^3s^3\\
		&\quad \quad +12q^2s^4-5qs^5+2s^6+10q^3s^2-12q^2s^3+7qs^4-4s^5-4q^3s\\
		&\quad \quad +12q^2s^2-7qs^3+4s^4-4q^2s+qs^2-s^3+q^2.
	\end{aligned}
	$$
	Then,
	$$
	\begin{aligned}
		(a_1,a_2,a_3)&=\big((s-1)t_1t_2t_3, sq(1-s)t_1t_5, (s^2-q)t_1^2t_4\big), \\
		(b_1, b_2, b_3)&=\big((s-1)qt_1t_2t_3, -t_1^2t_5, (s-1)s(s^2-q)t_1t_4\big), \\
		(c_1,c_2,c_3)&=\big((1-s)t_1t_5, (s-1)q t_1t_2t_4, s(s^2-q)t_1^2t_3\big), \\
		(d_1, d_2, d_3)&=\big((1-s)qt_1t_5, (s-1)st_1t_2t_4, (s^2-q)t_1^2t_3 \big),
	\end{aligned}
	$$
	is a two-parameter solution to \eqref{Sln} with $(\ell,n)=(4,1)$.
\end{proposition}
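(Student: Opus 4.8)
The plan is to treat the statement for what it is: a pair of polynomial identities in $\Z[q,s]$. Since $q,s$ are integers and $t_1,\dots,t_5$, together with all twelve displayed entries, lie in $\Z[q,s]$, integrality is automatic, and it suffices to verify that the four sums $M_j$ coincide (call the common value $M$) and that the four products $N_j$ coincide (call it $N$); in other words, that
\begin{equation*}
a_1+a_2+a_3=b_1+b_2+b_3=c_1+c_2+c_3=d_1+d_2+d_3
\end{equation*}
and
\begin{equation*}
a_1a_2a_3=b_1b_2b_3=c_1c_2c_3=d_1d_2d_3
\end{equation*}
hold in $\Z[q,s]$. I would handle the two identities separately; the product identity is essentially free, and the sum identity carries all of the (purely computational) weight.

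For the products, no polynomial arithmetic is required. Reading off the monomial factors of the three entries of each triple, one sees that all four products consist of the same unordered multiset of factors, so that
\begin{equation*}
a_1a_2a_3=b_1b_2b_3=c_1c_2c_3=d_1d_2d_3=-qs(s-1)^2(s^2-q)\,t_1^{4}\,t_2t_3t_4t_5=:N;
\end{equation*}
in each triple the single minus sign arises exactly once, from the product $(s-1)(1-s)=-(s-1)^2$ of the two entries carrying those factors in $a$, $c$, $d$, and from the explicit $-1$ in $b_2$ in the case of $b$. This step is pure bookkeeping.

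For the sums, I would first pull out the common factor $t_1$: every entry is divisible by $t_1$, so $M_j=t_1G_j$ with, for example,
\begin{equation*}
G_1=(s-1)t_2t_3+sq(1-s)t_5+(s^2-q)t_1t_4,
\end{equation*}
and $G_2,G_3,G_4$ the corresponding combinations coming from $b,c,d$. Each $G_j\in\Z[q,s]$ has total degree $13$, and it remains to show $G_1=G_2=G_3=G_4$. Instead of expanding all four, I would verify the three differences $G_3-G_4$, $G_1-G_2$, $G_1-G_3$, each of which carries an evident linear common factor. One computes directly, for instance,
\begin{equation*}
G_3-G_4=(s-1)\big[(q-1)t_5+(q-s)t_2t_4+(s^2-q)t_1t_3\big],
\end{equation*}
so that $G_3=G_4$ is equivalent to the single relation $(q-1)t_5=(s-q)t_2t_4+(q-s^2)t_1t_3$ among the $t_i$; similarly $G_1-G_2$ is divisible by $q-1$ (here one may use the auxiliary identity $t_1-s(s-1)=(q-1)(t_2-t_1)$) and $G_1-G_3$ is divisible by $s-1$, each leaving a shorter $\Z[q,s]$-linear relation among the products $t_1t_3$, $t_1t_4$, $t_2t_3$, $t_2t_4$ and $t_5$. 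These residual relations are exactly the ones built into the choice of the $t_i$, and each is confirmed by a routine expansion and comparison of coefficients (conveniently in SageMath, though feasible by hand).

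The only real obstacle is the size of that last step: the polynomials have moderately high degree with many monomials, so that, without a computer algebra system, the verification is long though entirely mechanical — there is no conceptual difficulty anywhere. I would close with the remark that, for all but finitely many integer pairs $(q,s)$, the four triples are pairwise distinct with nonzero entries (and positive for suitable ranges of $(q,s)$), so that $\{a,b,c,d\}$ is a bona fide element of $\mathcal{S}_4(M,N)$ in the sense of Problem~\ref{problem}; this follows from a single explicit specialisation together with the usual Zariski-openness argument.
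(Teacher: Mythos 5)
Your proof is correct, but it runs in the opposite direction from the paper's. The paper \emph{derives} the solution rather than verifying it: it starts from the ansatz $(a_i)=(p,wqs,rz)$, $(b_i)=(pq,rw,sz)$, $(c_i)=(w,qr,spz)$, $(d_i)=(wq,rs,pz)$, for which all four products equal $pqrswz$ by construction, and then forces the four sums to agree by successive elimination --- solving $M_1=M_2$, $M_2=M_3$, $M_3=M_4$ for $w$, equating the resulting expressions to get $z$, and landing on a quadratic in $p$ whose leading coefficient is annihilated by the choice $r=(q^2s^2-2q^2s+qs+q-1)/(s-1)$, after which $p$ is rational in $q,s$ and a final scaling produces the polynomials $t_1,\dots,t_5$. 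Your verification of the product identity by bookkeeping of the factors $(s-1)$, $q$, $s$, $(s^2-q)$, $t_1,\dots,t_5$ in effect rediscovers this multiplicative structure of the ansatz; your treatment of the sums (pulling out $t_1$, then reducing each difference $G_i-G_j$ modulo its visible linear factor to a short bilinear relation among the $t_i$) is a genuinely different, and rather cleaner, organization of the computational burden than the paper's chain of eliminations --- your auxiliary identity $t_1-s(s-1)=(q-1)(t_2-t_1)$ and the relation $(q-1)t_5=(s-q)t_2t_4+(q-s^2)t_1t_3$ both check out. What the paper's route buys is an explanation of where the $t_i$ come from and a template for hunting further parametrisations; what yours buys is a self-contained, directly checkable certificate for the proposition as stated, plus the nondegeneracy remark (distinctness of the four triples for all but finitely many $(q,s)$) that the paper omits. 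Both proofs ultimately defer the same amount of mechanical polynomial expansion to a computer algebra system, so neither is more rigorous than the other.
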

\begin{proof}
	Consider $(a_1,a_2,a_3)=(p,wqs,rz)$, $(b_1,b_2,b_3)=(pq,rw,sz)$, $(c_1,c_2,c_3)=(w,qr,spz)$ and $(d_1,d_2,d_3)=(wq,rs,pz)$. Then, clearly the products of the components of these triples are equal. Solving the equations $M_1=M_2$, $M_2=M_3$, $M_3=M_4$ gives three formulas for $w$. By the equality between the first two and the second two of the resulting $w$'s, we get two formulas for $z$. The equality between these $z$'s gives rise to
	\begin{multline*}
		(q^2s^2-2q^2s+qs-rs+q+r-1)p^2\\
		+(-qrs^3+q^2rs+r^2s^2+q^2r-qr^2-qr-qs+s)p\\
		-r(q^2s^2-qs^3+q^2r-q^2s-qr^2+r^2s-rs+s^2)=0.
	\end{multline*} 
	By putting $q^2s^2-2q^2s+qs-rs+q+r-1=0$ in the latter equation, it follows that
	\begin{equation}
		\label{r}
		r = \frac{q^2s^2-2q^2s+qs+q-1}{s-1}
	\end{equation}
	and
	\begin{equation}\begin{aligned}
			\label{p}
			p =-r\frac{q^2s^2-qs^3+q^2r-q^2s-qr^2+r^2s-rs+s^2}{qrs^3-q^2rs
				-r^2s^2-q^2r+qr^2+qr+qs-s}.
	\end{aligned}\end{equation}
	Now, by substituting \eqref{r} in \eqref{p} and one of the obtained $z$'s and $w$'s, we get a rational solution of \eqref{Sln} for $(\ell,n)=(4,1)$ in terms of $q$ and $s$. By scaling we get the desired result as given in the statement of the theorem.
\end{proof}

Now, we consider the subset $\mathcal C:=\mathcal{C}(M,N)$ of ${\mathcal{S}}_{4}(M,N)$ with the elements introduced in Proposition~\ref{ours}. Then, we set 
$$f_1(x)=\prod_{i=1}^{3}\{(x+a_i)(x+b_i)\}, \  
f_2(x)=\prod_{i=1}^{3}\{(x+c_i)(x+d_i)\},
$$
so that 
\begin{equation}
	\label{f1234}
	f_1(x)-f_2(x)= Tx^4+MTx^3+(T_1T_2-T_3T_4)x^2+NTx,
\end{equation}
where 
$$T_1=\sum_{1\leq i<j\leq 3}a_ia_j, \ T_2=\sum_{1\leq i<j\leq 3}b_ib_j, \ T_3=\sum_{1\leq i<j\leq 3}c_ic_j, \ T_4=\sum_{1\leq i<j\leq 3}d_id_j,$$
and $T=T_1+T_2-T_3-T_4$. 

One can rewrite \eqref{f1234} as $\phi^{2}_{1}(x)-\phi^{2}_{2}(x)$ where
$$\begin{aligned}
	\phi_{1}(x)&=x^3+Mx^2+\frac{4(T_{1}T_{2}-T_{3}T_{4})+T^2}{4T}x+N, \\
	\phi_{2}(x)&= x^3+Mx^2+\frac{4(T_{1}T_{2}-T_{3}T_{4})-T^2}{4T}x+N.
\end{aligned}
$$
It follows that
$$\phi^{2}_{i}(x)-f_i(x)=K x^4+MK x^3+H x^2+NKx, \ i=1,2,$$
where 
$$\begin{aligned}
	K &= -\frac{T^2+2(T_3+T_4)T-4(T_1T_2-T_3T_4)}{2T},\\
	H &= \frac{\left(T^2+4(T_1T_2-T_3T_4)\right)^2}{16T^2}-T_1T_2.\\
\end{aligned}$$

We now introduce the quartic elliptic curve
\begin{equation}
	\label{ell4}
	y^2=Kx^4+MKx^3+Hx^2+NKx,
\end{equation}
which contains the rational points $P_{a_i}$, $P_{b_i}$, $P_{c_i}$, $P_{d_i}$ with abscissas $-a_i$, $-b_i$, $-c_i$ and $-d_i$ for $i=1,2,3$, respectively.

The quartic curve~\eqref{ell4} reduces to the following cubic elliptic curve 
\begin{equation}
	\label{cubicell4}
	{\mathcal E}_{\mathcal C} : Y^2=X^3+HX^2+MNK^2X+N^2K^3,
\end{equation}
by the rational transformation $X=NK/x$, $Y=NKy/x^2$. In the view of this transformation, the above twelve points are sent respectively to the points ${\mathcal P}_{a_i}$, ${\mathcal P}_{b_i}$, ${\mathcal P}_{c_i}$, ${\mathcal P}_{d_i}$ with $X$-coordinates 
$$-\frac{NK}{a_{i}}, \ -\frac{NK}{b_{i}}, \ -\frac{NK}{c_{i}}, \ \text{and} \  -\frac{NK}{d_{i}}  \quad \text{for} \ i=1,2,3.$$

We have thus constructed a family of elliptic curves, defined over $\Q(q,s)$, coming from the elements of $\mathcal{C}$, that has the above twelve points.

\begin{remark}
	Notice that the points ${\mathcal P}_{a_i}$ are co-linear, showing that at most two of them can be linearly independent. The same result holds for each of the points ${\mathcal P}_{b_i}$, ${\mathcal P}_{c_i}$ and ${\mathcal P}_{d_i}$. Therefore, at most eight points of the twelve points of ${\mathcal{E}}_{\mathcal{C}}$ can be linearly independent. 
\end{remark}

In the next theorem we show that eight of the twelve points of ${\mathcal E}_{\mathcal C}$ are linearly independent for infinitely many tuples $(q,s)$. 

\begin{theorem}\label{th3}
	For the set of all tuples $(q,s)$ except for a thin subset, the rank of the family ${\mathcal E}_{\mathcal{C}}$ is at least eight with the eight linearly independent points ${\mathcal P}_{a_i}$, $\mathcal{P}_{b_i}$, ${\mathcal P}_{c_i}$, ${\mathcal P}_{d_i}$, $i=1,2$.
\end{theorem}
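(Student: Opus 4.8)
The plan is to mimic the proofs of Theorems~\ref{th1} and~\ref{th2}, applying Silverman's specialisation theorem \cite[Theorem~11.4]{Silverman}. Since ${\mathcal E}_{\mathcal{C}}$ is defined over the rational function field $\Q(q,s)$, it suffices to produce a single specialisation $(q,s)=(q_0,s_0)$ with $q_0,s_0\in\Q$ for which the eight points $\mathcal{P}_{a_i}$, $\mathcal{P}_{b_i}$, $\mathcal{P}_{c_i}$, $\mathcal{P}_{d_i}$ ($i=1,2$) are linearly independent on the specialised curve over $\Q$; linear independence over $\Q(q,s)$ then follows, so the generic rank is at least eight, and this persists for all but finitely many $(q,s)$. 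Combined with the preceding remark that at most eight of the twelve points can be independent, this also shows the bound is sharp.

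First I would record the non-degeneracy constraints a usable specialisation must satisfy: $s_0\neq 1$ so that $r$ in \eqref{r} and all twelve parts in Proposition~\ref{ours} are defined; $T=T_1+T_2-T_3-T_4\neq 0$ so that $\phi_1,\phi_2$, $K$ and $H$ are defined; $K\neq 0$ and $N\neq 0$ so that the substitution $X=NK/x$, $Y=NKy/x^2$ carrying \eqref{ell4} to the Weierstrass model \eqref{cubicell4} is a genuine birational map and the abscissae $-NK/a_i,\dots,-NK/d_i$ are finite and nonzero; the discriminant of ${\mathcal E}_{\mathcal{C}}$ nonzero; and the twelve original abscissae $-a_i,-b_i,-c_i,-d_i$ pairwise distinct, so the twelve points are honest affine points lying in the four collinear triples described in the remark. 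Then I would substitute a suitably small choice of $(q_0,s_0)$ into Proposition~\ref{ours}, compute the four explicit triples and hence $M,N,T_1,\dots,T_4,T,K,H$, write down the explicit equation of the specialised curve ${\mathcal E}_{\mathcal{C}}$, and list the $X$-coordinates of the eight selected points.

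The final step is to compute, with SageMath \cite{Sage}, the determinant of the N\'eron--Tate height pairing matrix (the regulator) of these eight points and verify that it is a nonzero real number; a nonzero regulator forces linear independence, and in particular each point has infinite order. The conclusion of Theorem~\ref{th3} then follows from Silverman's theorem exactly as in the proofs of Theorems~\ref{th1} and~\ref{th2}.

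The main obstacle is not conceptual but computational: for an arbitrary $(q_0,s_0)$ the eight points might accidentally satisfy a linear relation on the specialised curve (or a denominator above might vanish, or the specialised curve might be singular), so one has to search through small values of $q,s$ until the regulator comes out nonzero, while also keeping the resulting integer coefficients small enough that SageMath computes the canonical heights reliably. Once a good pair is fixed, checking the non-degeneracy conditions ($T\neq 0$, $K\neq 0$, $N\neq 0$, nonzero discriminant, twelve distinct abscissae) is a routine verification.
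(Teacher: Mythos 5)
Your proposal follows essentially the same route as the paper: invoke Silverman's specialisation theorem, exhibit one good specialisation (the paper takes $(q,s)=(3,2)$, giving the set ${\mathcal{C}}(-1688,47038464)$), and verify with SageMath that the regulator of the eight specialised points is nonzero (the paper obtains $15150.2483213544$). The only thing missing is the explicit choice of $(q_0,s_0)$ and the resulting numerical data, which you correctly identify as the sole remaining (routine) computational step.
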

\begin{proof}	
	We specialise at $(q,s)=(3,2)$ which makes
	$$
	t_1=8, \quad t_2= 11, \quad t_3= 1, \quad t_4= -6, \quad t_5= 29,
	$$ 
	and hence we have 
	\begin{multline*}
		{\mathcal{C}}(-1688,47038464)=\{(88, -1392, -384),  (264, -1856, -96), \\
		(-232, -1584, 128),  (-696, -1056, 64)\}
	\end{multline*}
	and
	$$
	\begin{array}{c}
		T_1= 378240, \quad T_2= -337152, \quad T_3= 135040, \quad T_4= 622848, \quad 
		T= -716800,  \vspace{.2cm}  \\ 
		K=191008, \quad H= 140991510784.
	\end{array}
	$$
	The regulator of the eight points with $X$-coordinates
	$$
	\begin{aligned}
		X({\mathcal{P}}_{a_1})&=-102099124224, & X({\mathcal{P}}_{c_1})&=38727254016, \\
		X({\mathcal{P}}_{a_2})&=6454542336, & X({\mathcal{P}}_{c_2})&= 5672173568, \\
		X({\mathcal{P}}_{b_1})&=-34033041408, & X({\mathcal{P}}_{d_1})&= 12909084672, \\
		X({\mathcal{P}}_{b_2})&=4840906752, & X({\mathcal{P}}_{d_1})&= 8508260352, 
	\end{aligned}
	$$
	on the specialised elliptic curve 
	$$
	\begin{aligned}
		{\mathcal{E}}_{\mathcal{C}}:	Y^2 &= X^3 + 140991510784 X^2 - 2896867880665872334848 X\\
		&\qquad + 15419167818458889008922652311552
	\end{aligned}
	$$
	is the nonzero value $15150.2483213544$, carried out by SageMath. The result follows now from Theorem~\ref{Neron}.
\end{proof}

\section{Heuristics}

As all curves in the families ${\mathcal{E}}_{\mathcal A}$, ${\mathcal{E}}_{\mathcal B}$ and ${\mathcal{E}}_{\mathcal C}$ have trivial torsion subgroups in general, to identify promising high-rank candidates we used heuristic arguments (motivated by the BSD conjecture) given by Mestre~\cite{Mestre} and Nagao~\cite{Nagao}. They suggest that for curves of high rank, certain sums should assume the largest values in the observed families. The first of these sums is $$S_1(X)=\sum_{p \le X}\frac{N_p+1-p}{N_p}\log p $$ where $X$ is a prime bound for primes $p$, and $N_p=|E(\mathbb{F}_p)|$ is the number of points on elliptic curve $E$ under the reduction modulo $p$. We followed the sieve phase of the general method for finding high-rank elliptic curves described in \cite[pp.~64--68]{Duje}.

The Mestre--Nagao sum $S_1(X)$ with the prime bound $X = 10^{6}$ was calculated in Magma \cite{Magma} for all curves with the absolute values of parameters $p$,$q$,$r$,$s$,$t$,$k$,$h$ up to $40$. For curves with $S_1(10^{6}) \geq 100$, {\sf{TwoSelmerGroup}} function in Magma was used to deduce the upper bound $R$ on the rank of the Mordell--Weil group of the curve. For all curves with $R \geq 11$, {\sf{DescentInformation}} function in Magma was used to uncover the independent generators of the Mordell--Weil group of the curve.

Parameters for some high-rank curves are listed in the next tables. Note that all listed curves are non-isomorphic.

We were also able to find a number of curves of rank $15$ in the form \eqref{cubicell4} where the set of four triples $\{(a_1,a_2,a_3), (b_1,b_2,b_3), (c_1,c_2,c_3), (d_1,d_2,d_3)\}$ was determined by a direct search given a fixed sum $M$ rather than using Proposition \ref{ours}. The first curve of rank $15$ occurs for $M = 4907$ $(N = 1628394768)$ whose respective set is  
$$\{(632, 726, 3549), (312, 2054, 2541), (507, 924, 3476), (474, 1001, 3432)\};$$ the minimal model of the curve is
$$
\begin{aligned}
	y^{2} + xy = x^{3} & - 901882569760647935195484561648738x \\
	& + 13932920298020241870290850727467604223423326273092.
\end{aligned}
$$

Magma's special function {\sf{DescentInformation}} was able to find $15$ independent points on this curve in $184$ core-hours on Intel Core i$7$-$8700$ CPU, although our knowledge of the $8$ independent points on \eqref{cubicell4} reduced the search time to $1.6$~core-hours (over a hundredfold calculation speedup).

In that light, Proposition~\ref{ours} and a rank $13$ curve obtained from rational $(q, s)$ in the last table present significant value.

\begin{table}[htbp]
	\caption{High rank curves ${\mathcal{E}}_{\mathcal A}$}\label{table1}
	\begin{tabular}{lc lc}
		\toprule
		$(p,q,r,s,t,k)$ & Rank & $(p,q,r,s,t,k)$ & Rank  \\ 
		\midrule
		$(-71, -54, -36, 14, 36,  1)$ & $14$ & $( 32,  41, 55, 60, 46,  1)$ & $14$  \\
		\midrule
		$(-37, -36,  37, 25, 36, 37)$ & $13$ & $(-33, -29, 17,  6, 13,  9)$ & $13$ \\ 	
		$(-36, -33,  31,  6, 23,  7)$ & $13$ & $(-30, -27, 12, 16, 13, 17)$ & $13$ \\
		$(-35, -34,  17, 29, 14, 13)$ & $13$ & $(-25, -15, 20,  8, 21, 22)$ & $13$ \\ 	
		$(-35, -31,   8, 12, 18, 11)$ & $13$ & $(-22, -11, 22, 13, 22,  2)$ & $13$ \\ 	
		\midrule
		$(-22, -18,   4, 20,  9, 19)$ & $12$ & $(-19, -18, -5,  6, 19,  1)$ & $12$ \\
		$(-21, -13,  11, 14, 15, 19)$ & $12$ & $(-18,   6, 12, 16,  6, 11)$ & $12$ \\
		$(-20, -16,  13,  3, 16,  7)$ & $12$ & $(-17, -10,  9,  1, 14,  9)$ & $12$ \\
		$(-19,   8,  16,  7,  9, 16)$ & $12$ & $(-16,  -5, 20,  2, 25,  1)$ & $12$ \\
		\bottomrule
	\end{tabular}
\end{table}

\begin{table}[htbp]
	\caption{High rank curves ${\mathcal{E}}_{\mathcal B}$}\label{table3}
	\begin{tabular}{lclc}
		\toprule
		$(p,q,r,k,h)$ & Rank & $(p,q,r,k,h)$ & Rank \\ 
		\midrule
		$(-12,   7,  -3, 15, -1)$ & $13$ & $(-1, -8,  -6,  5,   4)$ & $13$ \\
		$( -4,   5, -13, -1,  1)$ & $13$ & $( 4, -3, -14, 13, -14)$ & $13$ \\
		$( -2, -13,  11,  1,  6)$ & $13$ & $(11, -2, -12,  4,  -3)$ & $13$ \\
		$( -2,   8,  15, 11, -4)$ & $13$ & $(14, -2, -13,  1,   8)$ & $13$ \\
		\midrule
		$(-11,   5,   3,  5,  6)$ & $12$ & $(-3, 10,   3,  2,  -5)$ & $12$ \\
		$( -8,  -6,   9,  5, -7)$ & $12$ & $(-2, 10,   2,  8,  -7)$ & $12$ \\
		$( -7,   3,   2,  8, -5)$ & $12$ & $(-1,  7,  -9,  1,  -6)$ & $12$ \\
		$( -5,  -6,   5, -3,  4)$ & $12$ & $( 3, -7,   9,  7,   6)$ & $12$ \\
		\midrule
		$( -6,   3,  -1, -2,  6)$ & $11$ & $(-2,  4,   3, -4,   5)$ & $11$  \\
		$( -5,  -5,   5, -2, -3)$ & $11$ & $( 2,  3,   6, -5,  -2)$ & $11$  \\ 
		$( -4,  -3,   4, -2,  6)$ & $11$ & $( 2,  4,  -5, -3,  -4)$ & $11$  \\
		$( -3,  -4,   3, -3, -5)$ & $11$ & $( 5, -2,  -5,  5,  -1)$ & $11$  \\
		\bottomrule
	\end{tabular}
\end{table}

\begin{table}[htbp]
	\caption{High rank curves ${\mathcal{E}}_{\mathcal C}$}\label{table5}
	\begin{tabular}{lc lc}
		\toprule
		$(q,s)$ & Rank & $(q,s)$ & Rank \\ 
		\midrule
		$(7/11, -1)$ & $13$ && \\
		\midrule
		$(-12/11,  2)$ & $12$ & $(-1/4, -1)$ & $12$ \\
		$(-9/5, -2/5)$ & $12$ & $(3/4, 2)$ & $12$ \\
		$(-8, -7/2)$ & $12$ & $(9/5, -1)$ & $12$ \\
		$(-2/15, 2)$ & $12$ & $(12/7, 14/9)$ & $12$ \\
		\midrule
		$(-11/7, -1/3)$ & $11$ & $(-4/13, 2)$ & $11$ \\
		$(-9, -1)$ & $11$ & $(-2/3, 3)$ & $11$ \\
		$(-5/7, -1/2)$ & $11$ & $(1/7, -1)$ & $11$ \\
		$(-4/9, 2)$ & $11$ & $(11/8, 6/5)$ & $11$ \\
		\bottomrule
	\end{tabular}
\end{table}

\section{A concluding remark}
In this work, we considered the system of diophantine equations 
$$\sum_{i=1}^{3} x_{1i}= \cdots = \sum_{i=1}^{3} x_{\ell i}, \quad \prod_{i=1}^{3} x_{1i}= \cdots = \prod_{i=1}^{3} x_{\ell i}$$
for $\ell$ ranging from $2$ to $4$, which from the geometric
point of view is equivalent to $\ell$ rectangular boxes with integer sides having the same perimeter and the same volume, and for each system we introduced a family of elliptic curves and then studied them closely. Here arises this natural question that whether there exists more than four rectangular boxes with integer sides that have the same perimeter and the same volume.

\bibliographystyle{amsplain}

\end{document}